\documentclass[10pt,a4paper]{article}
\thispagestyle{empty} 

\usepackage{amsmath,amsthm,amsfonts,amssymb}

\usepackage{mathtools} 
\usepackage{mathrsfs} 

\usepackage{color}
\usepackage{amssymb} 

\oddsidemargin=0.25in
\evensidemargin=0.25in
\textheight=8in
\textwidth=6.0in
\topmargin=-.1in

\newtheorem{thm}{Theorem}

\newtheorem{cor}[thm]{Corollary}
\newtheorem{prop}[thm]{Proposition}

\begin{document}

\title{Optimal proper connection of graphs}

\author{Shinya Fujita\\ 
School of Data Science, Yokohama City University,\\
22-2 Seto, Kanazawa-ku, Yokohama 236-0027, Japan,\\
Email: shinya.fujita.ph.d@gmail.com\\
}


\date{}

\maketitle
\makeatother

\begin{abstract}
An edge-colored graph $G$ is \textit{properly colored} if no two adjacent edges share a color in $G$.
An edge-colored connected graph $G$ is \textit{properly connected} if between every pair of
distinct vertices, there exists a path that is properly colored.
In this paper, we discuss how to make a connected graph properly connected efficiently. 
More precisely, we consider the problem to convert a given monochromatic graph into properly connected by recoloring $p$ edges with $q$ colors so that $p+q$ is as small as possible. We discuss how this can be done efficiently for some restricted graphs, such as trees, complete bipartite graphs and graphs with independence number $2$. 
\end{abstract}

\section{Introduction}\label{sec1}

All graphs considered in this paper are finite and simple. Our notation in this paper is standard. 
For a graph $G=(V(G), E(G))$, let $\alpha(G)$ be the independence number of $G$. Also, let $\alpha'(G)$ be the size of a maximum matching of $G$. Let $\kappa(G)$ be the vertex-connectivity of $G$. Let $diam(G)$ be the diameter of $G$. For a vertex $x\in V(G)$, let $N_G(x)=\{z\in V(G)|\ xz\in E(G)\}$. For a vertex subset $S$ of $V(G)$, $G[S]$ stands for an induced subgraph of $G$ induced by $S$.  
For other terminology and notation not defined here, we refer the reader to \cite{West}

An edge-colored graph $G$ is \textit{properly colored} if no two adjacent edges share a color in $G$. Properly colored paths and cycles appear in a variety
of fields such as genetics \cite{Dorninger,Dorninger2} and social sciences \cite{Chou}.
An edge-colored connected graph $G$ is \textit{properly connected} if between every pair of
distinct vertices, there exists a path that is properly colored.  In \cite{Borozan}, Borozan et al. defined a new notion called the \textit{proper connection number} $pc(G)$ of a connected graph $G$, where $pc(G)$ is the minimum
number of colors needed to color the edges of $G$ to make it properly connected.
As described in \cite{HLQ,Li}, this concept has a real application to build an efficient  communication network with no radio-frequency interference between each pair of wireless signal towers. 
There, roughly speaking, to avoid interference, it is important not to share the same frequency when a wireless transmission passes through a signal tower. In fact, the proposed network model can be regarded as an edge-colored graph that is properly connected. 
 For a more precise description, see \cite{HLQ,Li}. 

Recently, the notion of proper connection number attracts much attention from both theoretical and practical aspects, and thus a lot of work has been done extensively (see e.g., \cite{BD,F,GL,HL,LW}). For details in this recent topic, we refer the reader to the nice  survey of Li and Magnant \cite{Li}.   

In this paper, we are concerned with making an edge-colored graph properly connected efficiently. 
Let $(G, c)$ be a connected graph with a given edge-coloring $c$. 
Now we consider how to make $(G,c)$ properly connected by recoloring some edges with some colors. To minimize our effort to make $G$ properly connected, it would be natural to focus on the minimum value on the sum of numbers of edges and colors among such  recolorings.
Note that, such a value should be zero when $(G, c)$ is already properly connected. 

Perhaps the most fundamental and laborious case to this problem would be the case where $c$ assigns a common color on every edge of $G$, that is, the case where $G$ is a monochromatic colored graph. 
Therefore, in this paper, we shall initiate this study by assuming that all edges of $G$ have already been colored by a common color, say color $0$. 
For an integer $i\neq 0$, color $i$ is called a \textit{new color}.  

Keeping this assumption in mind, we define the following cost function of edge-colored graphs called the \textit{optimal proper connection number} for a monochromatic connected graph $G$. 

$pc_{opt}(G):=\min\{p+q|$ we can make $G$ properly connected\\
\hfill by recoloring $p$ edges of $G$ with $q$ new colors$\}.$\\

For a monochromatic connected graph $G$, suppose that $G$ becomes properly connected by recoloring $p$ edges of $G$ with $q$ new colors such that $p+q=pc_{opt}(G)$. Then we call such an edge-coloring of $G$ an \textit{optimal recoloring of $G$}.

By definition, note that $pc_{opt}(K_n)=0$ holds because any monochromatic complete graph is properly connected. Indeed, we see that a graph $G$ satisfies $pc_{opt}(G)=0$ if and only if $G$ is isomorphic to a complete graph. 
We can easily determine $pc_{opt}(G)$ for small graphs and some basic family of graphs. 
For example, we can check that $pc_{opt}(K_{2,3})=pc_{opt}(K_{3,3})=3, pc_{opt}(K_{3,4})=pc_{opt}(K_{4,4})=4$, $pc_{opt}(K_{1,m})=2m-2$, $pc_{opt}(C_n)=pc_{opt}(P_n)=\lfloor (n-1)/2\rfloor+1$.\\

In this paper, we shall investigate graphs with a small optimal proper connection number.
Along this line, we give an upper bound on $pc_{opt}(G)$ when $G$ is a graph with $\alpha(G)\le 2$ (see Theorem~\ref{a2}). 
We also give a formula in terms of the optimal proper connection number for trees and complete bipartite graphs (see Theorems~\ref{kmn} and ~\ref{main}). 

This paper organizes as follows. In Section 2, we give some basic observation on optimal proper connection number in edge-colored graphs. In Section 3, we prove our main results (Theorems~\ref{a2}, \ref{kmn} and \ref{main}). In Section 4, we discuss some extension and open problems in this topic.

\section{Preliminaries}

In order to give a good upper bound on $pc_{opt}(G)$ for a monochromatic connected graph $G$, we start with the following basic observation.

\begin{prop}\label{f1}
If a monochromatic graph $G$ contains $H$ as a spanning connected subgraph, then $pc_{opt}(G)\le pc_{opt}(H)$. 
\end{prop}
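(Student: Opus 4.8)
The plan is to exhibit a recoloring of $G$ that reuses an optimal recoloring of $H$ and achieves the same cost $p+q=pc_{opt}(H)$. First I would fix an optimal recoloring $c_H$ of $H$, so that $c_H$ recolors exactly $p$ edges of $H$ with $q$ new colors, $(H,c_H)$ is properly connected, and $p+q=pc_{opt}(H)$. Note that $G$ is connected (since it contains the spanning connected subgraph $H$) and monochromatic by hypothesis, so $pc_{opt}(G)$ is well defined and the comparison makes sense.

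Next I would define an edge-coloring $c_G$ of $G$ by keeping the color $c_H(e)$ for every edge $e\in E(H)$ and assigning color $0$ to every edge in $E(G)\setminus E(H)$. Since $V(H)=V(G)$ and $E(H)\subseteq E(G)$, this is a legitimate coloring of all of $G$. By construction, the edges of $G$ receiving a new (nonzero) color are precisely the $p$ edges recolored by $c_H$, and the set of new colors used is the same set of $q$ colors. Hence $c_G$ recolors $p$ edges of $G$ with $q$ new colors, so its cost is again $p+q$.

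The key step is to check that $(G,c_G)$ is properly connected. Given any two distinct vertices $u,v$, properness of $(H,c_H)$ provides a properly colored $u$--$v$ path $P$ in $H$. Because $P$ lies entirely in $H$ and $c_G$ agrees with $c_H$ on every edge of $H$, the path $P$ is also a $u$--$v$ path in $G$ whose edges carry exactly the same colors; in particular no two consecutive edges of $P$ share a color, so $P$ is properly colored in $(G,c_G)$ as well. Thus every pair of vertices is joined by a properly colored path and $(G,c_G)$ is properly connected.

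Finally, since $c_G$ makes $G$ properly connected at cost $p+q$, the definition of $pc_{opt}$ yields $pc_{opt}(G)\le p+q=pc_{opt}(H)$, as required. I do not anticipate a genuine obstacle here: the only point to be careful about is that the extra edges of $G$ (those kept in color $0$) are never needed, since we always route along $H$, and so they cannot destroy the properly colored paths inherited from $(H,c_H)$ because we simply avoid using them.
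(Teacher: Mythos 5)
Your proof is correct and is exactly the paper's argument (the paper dispatches it in one line: an optimal recoloring of $H$, extended by color $0$ on the remaining edges, makes $G$ properly connected at the same cost), just written out in full detail.
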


\begin{proof}
The assertion obviously holds because an optimal recoloring of $H$ in $G$ assures us that $G$ is properly connected. 
\end{proof}

For a monochromatic graph $G$, an edge $e=x_1x_2\in E(G)$ is \textit{good} if $V(G)-(N_G(x_1)\cap N_G(x_2))$ can be partitioned into two parts $A_1$ and $A_2$ such that $A_i\subset N(x_i)$ and $G[A_i]\cong K_{|A_i|}$ for $i=1,2$ (possibly, $A_i=\emptyset$ for some $i$).      

We can characterize monochromatic graphs $G$ having $pc_{opt}(G)\le 2$ as follows. 

\begin{prop}\label{f2}
A monochromatic connected graph $G$ of order at least $3$ has $pc_{opt}(G)\le 2$ if and only if $G$ contains a good edge. Moreover, recoloring any good edge of $G$ with a new color can be an optimal recoloring of $G$. 
\end{prop}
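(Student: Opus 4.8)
The plan is to reduce the statement to a local analysis of the very short properly colored paths that survive after a single recoloring, and then match that analysis against the good-edge partition. First I would reduce to recoloring exactly one edge. If $G\cong K_n$ then $pc_{opt}(G)=0$, every edge is easily seen to be good, and nothing needs recoloring. If $G$ is not complete, then any recoloring attaining $p+q\le 2$ that genuinely changes the coloring must have $p=q=1$: a recoloring with $q=0$ introduces no new color and one with $p=0$ recolors no edge, so in either case $G$ remains monochromatic and hence, by the fact recorded in the introduction that $pc_{opt}(G)=0$ iff $G\cong K_n$, is not properly connected. Thus for non-complete $G$ the bound $pc_{opt}(G)\le 2$ is equivalent to the existence of a single edge $e=x_1x_2$ such that recoloring $e$ with one new color, say color $1$, makes $G$ properly connected, in which case $pc_{opt}(G)=2$. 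This also yields the \emph{Moreover} clause once we identify the working single-edge recolorings with the good edges.

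The technical core is the following description of surviving paths, which I would isolate as a claim. After recoloring only $e=x_1x_2$, a properly colored path contains no two consecutive color-$0$ edges and uses the unique color-$1$ edge $e$ at most once; hence it has length at most $3$, and any such path of length $2$ or $3$ traverses $e$ in the form $a\,x_1\,x_2\,b$ with $ax_1$ and $x_2b$ colored $0$. Consequently two distinct vertices $u,v$ are properly connected if and only if $uv\in E(G)$, or they are joined by such a path through $e$, i.e.\ (up to swapping $x_1$ and $x_2$) one of $u,v$ lies in $N[x_1]$ and the other in $N[x_2]$.

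Both directions now follow from a short structural reading of this claim. Partition $V(G)\setminus\{x_1,x_2\}$ by adjacency to the endpoints into the common neighborhood $C=N(x_1)\cap N(x_2)$, the private neighborhoods $B_i$ of vertices adjacent to $x_i$ but not to $x_{3-i}$ (for $i=1,2$), and the set $D$ of vertices adjacent to neither endpoint. For the forward direction I would take a good edge, put $A_i=B_i\cup\{x_i\}$, and observe that, since $x_i$ is adjacent to every vertex of $B_i$, each $A_i$ induces a clique exactly when $B_i$ does; together with the removed set $C$ this is the partition witnessing that $e$ is good. Routing is then immediate: vertices in a common $A_i$ are joined by a single edge, while all remaining pairs (cross pairs between $A_1$ and $A_2$, pairs meeting $C$, and pairs involving $x_1$ or $x_2$) are routed through $e$ by the claim, so $G$ becomes properly connected. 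For the backward direction I would assume that recoloring $e$ works and read the same partition off: a vertex of $D$ could reach neither $x_1$ nor $x_2$ by any admissible short path, so $D=\emptyset$; and if a private neighborhood, say $B_1$, contained two non-adjacent vertices $u,u'$, then $u,u'$ would be joined neither by an edge nor by a path through $e$ (any such path forces $u$ or $u'$ into $N(x_2)$), contradicting proper connectedness. Hence $D=\emptyset$ and $B_1,B_2$ are cliques, which is exactly the good-edge condition for $e$.

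I expect the backward direction to be the main obstacle: its content is precisely that proper connectedness of the recolored graph simultaneously forbids vertices non-adjacent to both endpoints and forces each private neighborhood to be complete, and it is here that the sharp ``length at most $3$, through $e$'' description must be used carefully. The reduction to a single recolored edge and the forward routing are, by comparison, routine bookkeeping.
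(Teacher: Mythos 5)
Your proof is correct and follows essentially the same route as the paper's: reduce to recoloring a single edge $e=x_1x_2$ with one new color, observe that the only properly colored paths with at least two edges are the short paths threaded through $e$, and translate proper connectedness into the two conditions that every vertex lies in $N(x_1)\cup N(x_2)$ and that each private neighborhood induces a clique. You are merely more explicit than the paper about the forward-direction routing and about where $x_1,x_2$ themselves sit in the partition (you place $x_i$ in $A_i$, which matches the evident intent of the good-edge definition even though literally $x_i\notin N(x_i)$ --- an imprecision the paper's own proof glosses over in the same way).
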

\begin{proof}
The assertion obviously holds if $G$ is a complete graph. So we may assume that $G$ is not a complete graph. 
Suppose that $G$ contains a good edge $e$. Then, recoloring $e$ with color $1$, we can easily check that $G$ is properly connected by the definition of a good edge. Thus $pc_{opt}(G)\le 2$ and the second assertion holds. 

Next suppose that $pc_{opt}(G)\le 2$ and consider an optimal recoloring of $G$. 
We may assume that $G$ is now properly connected and $G$ has exactly one edge $e=x_1x_2$ with color $1$ and all other edges have color $0$. We claim that $e$ is a good edge. 
If there exists a vertex $y\in V(G)-(N_G(x_1)\cup N_G(x_2))$, then there is no properly colored path joining $x_1$ and $y$ in $G$ because $N_G(y)\cap \{x_1, x_2\}=\emptyset$ and $e$ is a unique edge having color $1(\neq 0)$. This contradicts the assumption that $G$ is properly connected. Thus we may assume that  $V(G)-(N_G(x_1)\cap N_G(x_2))$ can be partitioned into two parts $A_1$ and $A_2$ such that $A_i\subset N(x_i)$ for $i=1,2$  (possibly, $A_i=\emptyset$ for some $i$). 
Suppose that $A_1$ contains two vertices $y,z$ such that $yz\notin E(G)$. 
Since $\{y,z\}\cap N_G(x_2)=\emptyset$, obviously there is no properly connected path joining $y$ and $z$ in $G$. Again, this contradicts the assumption that $G$ is properly connected. Thus, by the symmetry of the roles of $A_1$ and $A_2$, we see that $G[A_i]\cong K_{|A_i|}$ for $i=1,2$ (possibly, $A_i=\emptyset$ for some $i$). 
Hence $e$ is a good edge, as claimed. This completes the proof of Proposition~\ref{f2}.     

\end{proof}

We next consider monochromatic connected graphs $G$ with $pc_{opt}(G)\le 3$. 
Unlike the case where $pc_{opt}(G)\le 2$, it seems complicated to characterize those graphs. As an initial step, in this paper, we investigate what kind of graphs $G$ satisfy $pc_{opt}(G)\le 3$. 

\begin{prop}\label{f3}
For a monochromatic graph $G$,  if $G$ contains a complete bipartite graph $H$ such that $H$ is a spanning subgraph of $G$ and each partite set of $H$ contains an edge in $G$, then $pc_{opt}(G)\le 3$. 
\end{prop}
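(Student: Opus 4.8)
The plan is to exploit the two edges lying inside the partite sets as ``switching'' edges carrying a single new color. Write $X$ and $Y$ for the two partite sets of the spanning complete bipartite subgraph $H$, and let $x_1x_2\in E(G)$ with $x_1,x_2\in X$ and $y_1y_2\in E(G)$ with $y_1,y_2\in Y$ be the two edges guaranteed by hypothesis (in particular $|X|\ge 2$ and $|Y|\ge 2$). I would recolor exactly these two edges with one new color $1$, leaving every other edge with color $0$. Since the two recolored edges have no common endpoint (as $X\cap Y=\emptyset$), this is a valid recoloring using $p=2$ edges and $q=1$ new color, so it suffices to check that it makes $G$ properly connected; that would give $pc_{opt}(G)\le p+q=3$.

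To verify proper connection I would split into three types of pairs. First, for $u\in X$ and $w\in Y$, the edge $uw$ belongs to $H$ and still has color $0$, hence is a (trivially) properly colored path. Second, for distinct $u,u'\in X$, consider the path $u\,y_1\,y_2\,u'$: its end edges $uy_1$ and $y_2u'$ lie in $H$ and have color $0$, while the middle edge $y_1y_2$ has the new color $1$, so the color sequence along the path is $0,1,0$ and the path is properly colored; the disjointness $X\cap Y=\emptyset$ ensures that $u,y_1,y_2,u'$ are four distinct vertices, so this is a genuine path. Third, by the symmetric argument using $x_1x_2$, any two distinct vertices $w,w'\in Y$ are joined by the properly colored path $w\,x_1\,x_2\,w'$. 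Thus every pair of distinct vertices is joined by a properly colored path.

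The only point requiring care is the second and third cases, namely pairs lying in a common partite set: the direct bipartite connections $u\,w\,u'$ between such vertices traverse two adjacent edges both of color $0$ and so fail to be proper. This is exactly what the recolored interior edge repairs, and the disjointness of the partite sets guarantees that the length-three detours are valid paths no matter which endpoints are chosen. I therefore do not expect a serious obstacle: once the two interior edges receive a common new color, the required properly colored paths are immediate, and the bound $pc_{opt}(G)\le 3$ follows at once.
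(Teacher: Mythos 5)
Your proposal is correct and uses exactly the same recoloring as the paper: both intra-partite edges receive a single common new color, giving $p+q=2+1=3$. The only difference is cosmetic—the paper justifies proper connection by observing that each $e_i$ is a good edge of $G[A_{3-i}\cup\{x_i,y_i\}]$ and invoking Proposition~\ref{f2}, whereas you verify the three types of vertex pairs directly with explicit $0,1,0$ paths.
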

\begin{proof}
Let $A_1$ and $A_2$ be the partite sets of $H$. 
By assumption, let $e_i=x_iy_i$ be an edge of $G[A_i]$ for $i=1,2$. 
Note that $e_i$ is a good edge of $G[A_{3-i}\cup\{x_i, y_i\}]$ for $i=1,2$. 
By Proposition~\ref{f2}, $pc_{opt}(G[A_{3-i}\cup\{x_i, y_i\}])\le 2$ and recoloring $e_i$ with color $1$, $G[A_{3-i}\cup\{x_i, y_i\}]$ is properly connected for $i=1,2$. 
Since $xy\in E(G)$ for any pair $x\in A_1, y\in A_2$, this implies that recoloring both $e_1$ and $e_2$ with color $1$ makes $G$ properly connected. This shows that $pc_{opt}(G)\le 3$. 

\end{proof}

By Proposition~\ref{f3}, we see that a monochromatic complete multipartite graph has a small optimal proper connection number because it contains the graph described in Proposition \ref{f3} as a spanning subgraph. 

\begin{cor}
Let $G$ be a monochromatic complete multipartite graph such that $G\cong K_{n_1,n_2,\ldots, n_l}$, where $l\ge 3$ and $2\le n_1\le \ldots \le n_l$. Then $pc_{opt}(G)\le 3$. 
\end{cor}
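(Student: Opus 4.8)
The plan is to deduce the bound directly from Proposition~\ref{f3}, by exhibiting inside $G$ a spanning complete bipartite subgraph each of whose two sides induces an edge. The one structural fact I would use throughout is that in a complete multipartite graph two vertices are adjacent exactly when they lie in different parts. Consequently, if $H=K_{B_1,B_2}$ is a complete bipartite subgraph of $G$, then no part of $G$ can meet both $B_1$ and $B_2$, since such a pair would be a non-edge of $G$ but an edge of $H$; hence each part lies entirely in $B_1$ or entirely in $B_2$, and choosing $H$ amounts to nothing more than splitting the $l$ parts into two blocks. Moreover $G[B_j]$ contains an edge precisely when the block $B_j$ is a union of at least two parts. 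So to apply Proposition~\ref{f3} it suffices to partition the $l$ parts into two blocks, each consisting of at least two parts.

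For $l\ge 4$ this is immediate: I would place two of the parts in one block and the remaining $l-2\ge 2$ parts in the other. Both blocks are then unions of at least two parts, so $G[B_1]$ and $G[B_2]$ each contain an edge, and $H=K_{B_1,B_2}$ is a spanning complete bipartite subgraph of $G$ with an edge in each side. Proposition~\ref{f3} immediately gives $pc_{opt}(G)\le 3$, the two recoloured edges being one edge inside each block.

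The main obstacle is the case $l=3$, where Proposition~\ref{f3} cannot be invoked: every partition of three parts into two nonempty blocks leaves one block equal to a single part, which is independent and induces no edge. Here I would argue directly, recolouring two edges $e_1,e_2$ with a single new colour and verifying that every non-adjacent pair, that is every pair of vertices lying in a common part, is joined by a properly coloured path. Since a properly coloured path in a two-coloured graph must alternate colours, a pair $\{x,y\}$ inside a part $P$ with neither endpoint on $e_1$ or $e_2$ can be connected only by an alternating path of length $3$ or $5$; a length-$3$ path requires a recoloured edge disjoint from $P$, while a length-$5$ path requires the two recoloured edges to meet $P$ in two vertices that are then joined by the middle edge, which is impossible because $P$ is independent. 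I would therefore label the parts so that $V_1$ is smallest, pick distinct $u_1,u_2\in V_1$ together with $w_2\in V_2$ and $w_3\in V_3$, and recolour $u_1w_2$ and $u_2w_3$, arranging that the part met by both edges is the small part $V_1$ while each of $V_2,V_3$ still sees a recoloured edge avoiding it.

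The delicate point, and the step I expect to carry the real content, is exactly this counting at the doubly-met part. With only two recoloured edges some part is necessarily met by both of them, and that part can still harbour an uncovered within-part pair once it has at least two vertices lying off the recoloured edges. Verifying that the smallest part can always be made to absorb this role, so that no uncovered pair survives there, and that the two larger parts are then each handled by the recoloured edge that avoids them, is where I would spend the most care; it is also the place where control on the part sizes genuinely enters, so I would double-check this step closely for three parts that are all large before claiming the case $l=3$ is complete.
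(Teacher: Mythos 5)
Your reduction for $l\ge 4$ is exactly the paper's argument and is correct: grouping the parts into two blocks of at least two parts each yields a spanning complete bipartite subgraph of $G$ with an edge of $G$ inside each side, and Proposition~\ref{f3} gives $pc_{opt}(G)\le 3$. You are also right -- and more careful than the paper's one-line justification -- that this route is closed for $l=3$: since every part must lie entirely inside one side of any spanning complete bipartite subgraph, one side is always a single (independent) part, so Proposition~\ref{f3} cannot be invoked there.

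The step you flag as delicate is, however, a genuine gap, and it cannot be repaired. Your recoloring of $u_1w_2$ and $u_2w_3$ with $u_1,u_2\in V_1$ properly connects every pair except the pairs inside $V_1\setminus\{u_1,u_2\}$, so it succeeds only when $n_1\le 3$. When all three parts have at least $4$ vertices, no choice of two edges and one new color works: any two edges of a $3$-partite graph both meet some common part $P$, and each edge has at most one endpoint in $P$, so at least two vertices $x,y$ of $P$ avoid both recolored edges. A properly colored $x$--$y$ path must then alternate $0,1,0$ or $0,1,0,1,0$; the first pattern needs a recolored edge disjoint from $P$, and the second forces the two endpoints of the recolored edges lying in $P$ to be joined by the middle edge, which is impossible since $P$ is independent. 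As single-edge recolorings and recolorings with no new color are even weaker, this shows $pc_{opt}(K_{4,4,4})\ge 4$: the subcase $l=3$ with $n_1\ge 4$ cannot be closed along your lines, and it is not covered by the paper's own derivation from Proposition~\ref{f3} either. So your proof establishes the statement for $l\ge 4$ and for $l=3$ with $n_1\le 3$; the remaining subcase is precisely where both the argument and, apparently, the stated bound break down.
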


We finally give some observation on graphs with a forbidden subgraph condition. 
A graph $G$ is \textit{$P_4$-free} if it contains no $P_4$ as an induced subgraph. 
Although the following proposition has nothing to do with edge-coloring of graphs, it is useful when we prove our main results. 

\begin{prop}\label{P4free}
If $G$ is a connected $P_4$-free graph, then $G$ is a complete graph or $G$ contains a spanning complete bipartite subgraph $K_{m,n}$ with $\min\{2,\kappa(G)\}\le n\le m$ such that one partite set of the $K_{m,n}$ forms a minimum cutset of $G$. 
\end{prop}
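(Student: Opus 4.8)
The plan is to exploit the fact that connected $P_4$-free graphs are exactly the connected cographs, together with two standard features of that class: the family of $P_4$-free graphs is self-complementary (since $P_4$ is self-complementary), and a cograph on at least two vertices that is connected has a \emph{disconnected} complement. So I would first pass to the complement $\overline{G}$: as $G$ is connected on at least two vertices, $\overline{G}$ is disconnected, say with components $C_1,\dots,C_k$ where $k\ge 2$. Translating back, two vertices lying in distinct $C_i$ cannot be non-adjacent in $G$ (that would be an edge of $\overline{G}$ crossing components), so distinct components are completely joined in $G$. Hence for any union $X$ of some components and the union $Y$ of the rest, every $X$–$Y$ pair is an edge of $G$, i.e. $G$ contains the spanning complete bipartite subgraph with parts $X$ and $Y$. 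This is the source of the required $K_{m,n}$.

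Next I would single out the correct bipartition. If every $C_i$ is a single vertex then $G$ is complete and we are in the first alternative; otherwise let $C^*$ be a component of maximum order, with $M:=|V(C^*)|\ge 2$. Put $X=V(C^*)$ and $Y=V(G)\setminus X$, so by the previous paragraph $G\supseteq K_{|X|,|Y|}$ spanning. I claim $Y$ is a cutset: $G[X]$ is the complement, within $V(C^*)$, of the connected cograph $C^*$ on $\ge 2$ vertices, hence is disconnected, and $G-Y=G[X]$. Thus $Y$ is a vertex cut of size $|V(G)|-M$.

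The main work is to show $Y$ is a \emph{minimum} cut, equivalently $\kappa(G)=|V(G)|-M$. For this I would establish the structural lemma that every vertex cut $S$ of $G$ contains all but at most one of the sets $V(C_i)$. Indeed, if two components each kept a vertex outside $S$, then—using that distinct components are completely joined, and that any two surviving vertices of one component are linked through a surviving vertex of another—$G-S$ would be connected, contradicting that $S$ is a cut. Hence $S\supseteq\bigcup_{i\ne i_0}V(C_i)$ for some $i_0$, giving $|S|\ge |V(G)|-|V(C_{i_0})|\ge |V(G)|-M$. Combined with the cut $Y$ exhibited above, this yields $\kappa(G)=|V(G)|-M=|Y|$, so $Y$ is a minimum cutset and one partite set of the $K_{m,n}$ forms a minimum cutset, as required.

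Finally I would verify the size bound. The two parts have sizes $M$ and $|V(G)|-M=\kappa(G)$; writing $n\le m$ for them, split into two cases. If $\kappa(G)\le M$ then $n=\kappa(G)$ and $\min\{2,\kappa(G)\}\le\kappa(G)=n$. If $\kappa(G)>M$ then $n=M$, and since $M\ge 2$ forces $\kappa(G)>M\ge 2$, we get $\min\{2,\kappa(G)\}=2\le M=n$; either way $\min\{2,\kappa(G)\}\le n\le m$. The one delicate point I expect is this last case, where the minimum cut is the \emph{larger} side of the bipartition (as already happens for the octahedron $K_{2,2,2}$, where $M=2$ but $\kappa=4$); it is exactly there that $M\ge 2$ is needed to secure the lower bound on $n$.
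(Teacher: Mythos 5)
Your proof is correct, but it runs along a genuinely different track from the paper's. The paper argues directly and locally: it fixes an arbitrary minimum cutset $S$, and shows every vertex of $S$ is joined to all of $V(G)-S$ by exhibiting an induced $P_4$ of the form $yzxw$ (with $z\in C_1$, $w\in C_2$) whenever some $x\in S$ misses some $y\in C_1$; the bipartition is then $(S,V(G)-S)$ and the size bound follows because $G-S$ has at least two nonempty components. You instead invoke the cograph machinery -- specifically Seinsche's theorem that a connected $P_4$-free graph on at least two vertices has disconnected complement -- and build the bipartition from the co-components, taking $Y=V(G)\setminus V(C^*)$ for a largest component $C^*$ of $\overline{G}$ and then proving $Y$ is a minimum cut via the lemma that any cut must swallow all but one co-component. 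Your route buys more than the proposition asks for: it yields the exact formula $\kappa(G)=|V(G)|-M$ and locates the minimum cutsets explicitly, and your case analysis for $\min\{2,\kappa(G)\}\le n$ (including the $K_{2,2,2}$-type case where the cutset is the larger side) is handled correctly. The trade-off is that the disconnected-complement theorem is imported as a black box; the paper's argument is self-contained and needs only the one induced-$P_4$ contradiction (which is essentially the same elementary step you would need if you were to prove Seinsche's theorem from scratch). Both proofs are valid; if you want yours to stand alone in this paper you should either cite Seinsche (or Corneil--Lerchs--Stewart Burlingham) explicitly or include the short proof of the complement-disconnectedness fact.
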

\begin{proof}
We may assume that $G$ is not a complete graph.  
Let $S$ be a minimum cutset of $G$, and let $C_1,\ldots, C_l$ be the components of $G-S$, where $l\ge 2$.  
It suffices to show that every vertex of $S$ sends edges to all the vertices of $G-S$. 
Suppose not, and take $x\in S$ and $y\in V(G-S)$ such that $xy\notin E(G)$. 
We may assume that $y\in V(C_1)$. Since $S$ is a minimum cutset of $G$, there exists $z\in V(C_1-y)$ and $w\in V(C_2)$ such that $xz, xw\in E(G)$. Since $C_1$ is a component, we may assume that $y$ and $z$ are chosen so that $yz\in E(G)$ (to see this, take a shortest path $P=xx_1x_2\ldots x_ly$ in $G[\{x\}\cup C_1]$; if $l\ge 2$ then reset $y,z$ as $x_1=z, x_2=y$; if $l=1$ then reset $z$ as $x_1=z$).
Now, the path $yzxw$ is an induced $P_4$ in $G$. This is a contradiction. Thus the assertion holds. 

\end{proof}

\section{Main results}


We firstly give a sharp upper bound on $pc_{opt}(G)$ for graphs with small independence number. 

\begin{thm}\label{a2}
If $G$ is a monochromatic connected graph of order $n\ge 1$ such that $\alpha(G)\le 2$ then $pc_{opt}(G)\le 3$. 
\end{thm}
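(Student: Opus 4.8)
The plan is to reduce at once to the case where $G$ is not complete (if $G$ is complete then $pc_{opt}(G)=0$), so that $\alpha(G)=2$ and $\overline{G}$ is triangle-free. The guiding observation is that a single edge is always a properly colored path, so every pair of \emph{adjacent} vertices is automatically served; moreover, if we only recolor edges with the single new color $1$, then a path of length $2$ both of whose edges keep color $0$ is never properly colored. Hence the whole task is to recolor as few edges as possible with color $1$ so that every \emph{non-adjacent} pair acquires a properly colored path (which must then use a color-$1$ edge). I will either recolor a single edge (giving $pc_{opt}(G)\le 2$, in the spirit of Proposition~\ref{f2}) or recolor exactly two edges with color $1$ (giving $pc_{opt}(G)\le 3$). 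A preliminary remark drives the case split: if $\alpha(G)\le 2$ then $diam(G)\le 3$, since a shortest path of length $\ge 4$ would contain three pairwise non-adjacent vertices $v_0,v_2,v_4$.

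First I would dispose of the case where $V(G)$ is a union of two cliques; this covers $diam(G)=3$. Indeed, choosing $u,v$ with $d(u,v)=3$ forces $N(u)\cap N(v)=\emptyset$, and $\alpha(G)\le 2$ makes both $N(u)$ and $N(v)$ cliques (two non-adjacent vertices in $N(u)$ together with $v$ would be independent), so $X=\{u\}\cup N(u)$ and $Y=\{v\}\cup N(v)$ are cliques partitioning $V(G)$ with all cross-edges between $N(u)$ and $N(v)$. Recoloring one cross-edge $x_0y_0$ (with $x_0\in X$, $y_0\in Y$) with color $1$ then settles every non-adjacent pair $\{s,t\}$: necessarily $s\in X$ and $t\in Y$, and the path $s\,x_0\,y_0\,t$ is colored $0,1,0$, hence properly colored. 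So $pc_{opt}(G)\le 2$ here.

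The substantial case is $diam(G)=2$. Fix a non-edge $uv$ and write $A=N(u)\setminus N(v)$, $C=N(v)\setminus N(u)$ and $B=N(u)\cap N(v)\neq\emptyset$; as before $A$ and $C$ are cliques and $V(G)=\{u,v\}\cup A\cup B\cup C$. Assuming $|B|\ge 2$, I would take distinct $b_1,b_2\in B$ and recolor $ub_1$ and $vb_2$ with color $1$, then verify that every type of non-adjacent pair receives a short alternating path: e.g.\ $u$–$v$ via $u\,b_1\,v$; a pair $u,c$ with $c\in C$ via $u\,b_2\,v\,c$ (colors $0,1,0$); and an $A$–$C$ pair via a path of length up to $5$ such as $a\,u\,b_1\,v\,b_2\,c$. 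The engine making two edges enough is a repeated ``no independent triple'' argument: for a non-adjacent pair $\{\beta,\beta'\}\subseteq B$ and any third common neighbor $b$, at least one of $\beta,\beta'$ is adjacent to $b$ (else $\{\beta,\beta',b\}$ is independent), which hands us the path $\beta'\,u\,b_1\,\beta$ colored $0,1,0$; the analogous argument resolves the $A$–$B$, $B$–$C$ and $A$–$C$ pairs by supplying the one adjacency needed to close each displayed path.

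The main obstacle is the degenerate diameter-$2$ configuration in which every non-edge has a \emph{unique} common neighbor, so $|B|=1$ and the two-hub recoloring above is unavailable; the smallest instance is $C_5$, whose value $pc_{opt}(C_5)=3$ also witnesses that Theorem~\ref{a2} is sharp. I expect to handle this by a tailored two-edge recoloring — for example recoloring $ub$ together with a suitable edge incident to $v$ inside $\{v\}\cup C$ — supported by a separate but parallel case analysis, along with the boundary situations $A=\emptyset$ or $C=\emptyset$. The remaining effort is the routine, if lengthy, pair-by-pair verification that for each structural type of non-adjacent pair one of the alternating paths is present, with the $\alpha(G)\le 2$ triple argument invoked each time to guarantee the adjacency that makes the path properly colored.
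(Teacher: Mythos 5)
Your route is genuinely different from the paper's. The paper splits on whether $G$ is $P_4$-free: in the $P_4$-free case it invokes Proposition~\ref{P4free} to extract a spanning complete bipartite subgraph over a minimum cutset $S$ and then finishes via Proposition~\ref{f3} or a two-edge recoloring across $S$; in the other case it takes an induced $P_4=p_1p_2p_3p_4$, recolors $p_1p_2$ and $p_3p_4$ with one new color, and uses the partition of $V(G-P)$ into vertices dominating $\{p_1,p_2\}$, $\{p_3,p_4\}$ or $\{p_1,p_4\}$. You instead split on $diam(G)\in\{2,3\}$ and, in the diameter-$2$ case, on the size of the common neighbourhood $B$ of a chosen non-edge. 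Your diameter-$3$ case and your $|B|\ge 2$ case are sound (one small inaccuracy: it is not true that \emph{all} edges between $N(u)$ and $N(v)$ are present when $d(u,v)=3$ -- only that $X$ and $Y$ are cliques -- but your argument never uses the stronger claim, since the path $s\,x_0\,y_0\,t$ only needs edges inside $X$ and inside $Y$). The repeated ``no independent triple'' device correctly supplies the missing adjacency for the $A$--$C$ pairs, e.g.\ if $b_2c\notin E(G)$ then $ab_2\in E(G)$ and $c\,v\,b_2\,a$ is properly colored.

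The genuine gap is the case you defer: $diam(G)=2$ with every non-edge having a unique common neighbour (your $|B|=1$ case), together with the boundary situations $A=\emptyset$ or $C=\emptyset$. You write that you ``expect to handle this by a tailored two-edge recoloring \ldots supported by a separate but parallel case analysis,'' but you do not carry it out, and this case cannot be waved away: it contains $C_5$ and, more generally, blow-ups of $C_5$ with a singleton part, which are exactly the extremal examples showing the bound $3$ is tight. For the record, the recoloring you name does work and the verification is short: with $B=\{b\}$ and $c_0\in C$, recolor $ub$ and $vc_0$ with color $1$; then every non-adjacent pair is joined by one of the alternating paths $u\,b\,v$, $u\,b\,v\,c_0\,c$, $v\,b\,u\,a$, $a\,u\,b\,v\,c_0\,c$, $a\,u\,b$, $c\,c_0\,v\,b$, using that $C$ is a clique (and symmetrically with the roles of $u,A$ and $v,C$ exchanged when $C=\emptyset\ne A$; the case $A=C=\emptyset$ is a $P_3$). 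Until that verification is written down, the proof is incomplete precisely on the family of graphs that makes the theorem sharp. By comparison, the paper's induced-$P_4$ case absorbs all of these configurations in one stroke, which is what its $P_4$-free/non-$P_4$-free dichotomy buys over the diameter dichotomy.
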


\begin{proof}

The theorem obviously holds for small $n$. 
So we may assume that $n\ge 5$. 
If $\alpha(G)=1$ then $pc_{opt}(G)=0$ since $G$ is a complete graph. Thus we may assume that $\alpha(G)=2$ and hence $G$ is not a complete graph.

We firstly consider the case where $G$ is a $P_4$-free graph.  
In view of Proposition~\ref{P4free}, let $S$ be a minimum cutset of $G$ such that $G$ contains a spanning complete bipartite graph whose partite sets are $S$ and $V(G-S)$. Since $\alpha(G)=2$, $G-S$ consists of two components $C_1,C_2$ such that each $G[C_i]$ forms a complete graph. 
Suppose for the moment that $|S|=1$, say $S=\{v\}$. 
Take $u\in V(C_1)$. Note that, by the structure of $G$, $uv$ is a good edge of $G$. Consequently, by Proposition~\ref{f2}, $pc_{opt}(G)\le 2$.

Thus we may assume that $|S|\ge 2$. 
In view of Proposition~\ref{f3}, if both $G[S]$ and $G-S$ contain an edge, respectively, then $pc_{opt}(G)\le 3$. Thus we may assume that either $G[S]$ or $G-S$ has no edge. 
Since $\alpha(G)=2, n\ge 5$ and $S$ is a minimum cutset of $G$, it suffices to consider the case where $G[S]$ has no edge and $|S|=2$.  Let $e_1, e_2$ be two independent edges joining a vertex of $S$ and a vertex of $V(G-S)$, respectively.  Recoloring both $e_1$ and $e_2$ with color $1$, we can check that $G$ is properly connected because $C_1$ and $C_2$ are complete graphs. Thus $pc_{opt}(G)\le 3$.

Hence we may assume that $G$ contains a path $P=p_1p_2p_3p_4$ such that $G[\{p_1,p_2,p_3,p_4\}]\cong P_4$.  
Since $\alpha(G)=2$, we can partition $V(G-P)$ into three parts $X_1,X_2,X_3$ such that $X_1=\{v\in V(G-P)|\ vp_1, vp_2\in E(G)\}$, 
$X_2=\{v\in V(G-P)|\ vp_3, vp_4\in E(G)\}$ and $X_3=\{v\in V(G-P)|\ vp_1, vp_4\in E(G)\}$.  
Then, by recoloring $p_1p_2$ and $p_3p_4$ with color $1$, respectively, we can easily check that $G$ is properly connected, thereby proving that $pc_{opt}(G)\le 3$.

\end{proof}

The upper bound on $pc_{opt}(G)$ in Theorem~\ref{a2} is sharp. To see this, let $A_0, A_1, \ldots, A_4$ be disjoint cliques, and add all edges between $A_i$ and $A_{i+1}$, where the indices are taken modulo $5$. Let $G$ be the resulting graph.  
In order to make $G$ properly connected, obviously we need to recolor at least two edges with a new color. 

So far, we observed graphs with small upper bound on $pc_{opt}(G)$. 
Now we consider a question to ask what kind of a family of graphs $\mathcal{G}$ we can describe an equality $pc_{opt}(G)=f(G)$ for each $G\in \mathcal{G}$, where $f(G)$ is a certain value depending on some parameters of $G$. 
In fact, complete bipartite graphs and trees belong to such family of graphs. 

\begin{thm}\label{kmn}
Let $G$ be a monochromatic complete bipartite graph $K_{m,n}$ such that $m\ge n\ge 2$ and $m+n\ge 9$. Then $pc_{opt}(G)=4$ for $n=2, 3$, and $pc_{opt}(G)=5$ for $n\ge 4$.  
\end{thm}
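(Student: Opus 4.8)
The plan is to handle the upper and lower bounds separately, after first reducing the properly-connected condition to a condition on same-part pairs only. Write $X,Y$ for the partite sets with $|X|=m\ge|Y|=n$. Since $G=K_{m,n}$ is complete bipartite, any two vertices in different parts are joined by an edge and hence are trivially connected by a proper length-$1$ path; so a recoloring makes $G$ properly connected if and only if every pair inside $X$ and every pair inside $Y$ admits a proper path. I will call a vertex \emph{plain} if every edge incident to it keeps color $0$, and I record the elementary parity fact: if only one new color is used (colors $0$ and $1$ only), a proper path is an alternating path, and a path between two same-part vertices has even length; starting from a plain vertex its first edge is $0$, so after an even number of alternating steps the last edge has color $1$, which cannot enter a second plain vertex. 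Hence, under a single new color, no two plain vertices of the same part can be joined.

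For the upper bounds I would exhibit explicit recolorings built from \emph{rainbow corners}. Recoloring $x_1y_1\to 1$ and $x_1y_2\to 2$ makes $x_1$ incident to two distinct new colors, and then any $x_i,x_j\in X$ are joined by the proper path $x_iy_1x_1y_2x_j$ (colors $0,1,2,0$), which always exists by completeness. When $n\le 3$ every pair in $Y$ involves $y_1$ or $y_2$, which $x_1$ already separates, so this $2$-edge, $2$-color recoloring gives $pc_{opt}\le 4$. For $n\ge 4$ I would additionally recolor $x_2y_1\to 2$, creating a second rainbow corner at $y_1$; then every $y_i,y_j\in Y$ is joined by $y_ix_1y_1x_2y_j$ (colors $0,1,2,0$), and a short check confirms all same-part pairs are connected, giving the $3$-edge, $2$-color recoloring with $pc_{opt}\le 5$.

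For the lower bounds I would rule out every recoloring with $p+q\le 3$, and for $n\ge 4$ also every recoloring with $p+q=4$, by a case analysis on $(p,q)$ using $1\le q\le p$. By Proposition~\ref{f2} it suffices to observe that $K_{m,n}$ has no good edge, since the endpoints of any edge have no common neighbor and each part is independent, so the required clique partition is impossible when $m+n\ge 9$; this already gives $pc_{opt}\ge 3$. The case $q=1$ is dispatched by the parity fact: too few recolored edges leave at least two plain vertices in a common part (here $m\ge 4$ always, or $n\ge 4$ in the harder range), which then cannot be connected, killing $(2,1)$ and $(3,1)$. The remaining and most delicate case is $(2,2)$: exactly two recolored edges carrying distinct colors. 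Here I would prove a structural length bound: since there are only two non-zero edges and a proper path forbids two consecutive $0$-edges, a proper path joining two same-part plain vertices has length at most $5$, hence exactly $4$, and its two middle (non-zero) edges must share the central vertex, which lies in the \emph{other} part. Consequently two plain vertices of $X$ can be joined only if the two recolored edges share an $X$-vertex, and two plain vertices of $Y$ only if they share a $Y$-vertex; two distinct edges cannot do both. As $m\ge n\ge 4$ guarantees at least two plain vertices in each part, one part is disconnected, so $(2,2)$ fails and $pc_{opt}\ge 5$.

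The main obstacle I expect is precisely this $(2,2)$ analysis: it is the only case where two new colors are available, so the easy parity obstruction vanishes and one must argue through the exact length-$4$ structure of proper paths to show that two recolored edges can furnish a rainbow corner in at most one of the two parts. By contrast, the bookkeeping in the upper-bound constructions, namely verifying all same-part pairs (including those incident to the recolored edges), is routine and will be carried out explicitly.
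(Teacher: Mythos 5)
Your proposal is correct, and at the top level it follows the same strategy as the paper: the upper-bound recolorings are the identical ``rainbow corner'' constructions (one corner centered in the large part for $n\le 3$; two overlapping corners, one centered in each part, for $n\ge 4$), and the lower bound proceeds by the same case analysis on the pairs $(p,q)$ with $q\le p$. Where you genuinely diverge is in how the individual cases are verified, and your versions are more explicit than the paper's. For the one-new-color cases $(2,1)$ and $(3,1)$ you isolate the parity obstruction (an alternating path of even length starting at a plain vertex must end in a color-$1$ edge) and apply it uniformly to two plain vertices of the large part; the paper instead argues ad hoc, treating the shared-vertex and matching configurations separately and asserting ``we can check'' for the former. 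For $(2,2)$ the paper splits into two sub-cases: when $e_1,e_2$ are independent it merges the two colors into one and contradicts optimality via the already-settled one-color analysis, and when they share a vertex $w$ it exhibits two plain vertices opposite $w$; your length-$4$ structural lemma (a proper path between plain same-part vertices must be $u\,a\,w\,b\,v$ with the two recolored edges meeting at $w$) handles both sub-cases at once and is, if anything, the cleaner argument, since it shows directly that two edges cannot supply a shared vertex in both parts. One trivial slip: you write that the central vertex of that length-$4$ path ``lies in the other part,'' but it lies in the \emph{same} part as the two endpoints --- which is exactly what your next sentence correctly uses, so nothing downstream is affected. The only unnecessary hedge is the clause ``when $m+n\ge 9$'' in your no-good-edge observation: $K_{m,n}$ has no good edge as soon as $n\ge 2$, since each partite set is independent of size at least $2$.
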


\begin{proof}
Let $N, M$ be the partite sets of $G(=K_{m, n})$ with $n=|N|, m=|M|$. 
We first consider the case $2\le n\le 3$. Take $a,b \in N$.

The upper bound $pc_{opt}(G)\le 4$ can be obtained by recoloring $ax$ with color $1$ and $bx$ with color $2$, where $x\in M$. (Since $|N-\{a,b\}|\le 1$, it is easy to check that the resulting edge-colored $G$ is properly connected.) 
Toward a contradiction, suppose that $pc_{opt}(G)\le 3$. This implies that we have exactly one new color (say, color $1$) to recolor at most two edges in $G$ to make $G$ properly connected. We can easily check that $G$ cannot be properly connected if we recolor exactly one edge. Thus we may assume that exactly two edges, say $e_1, e_2\in E(G)$ are recolored with color $1$ so that $G$ is properly connected.  
If $e_1$ and $e_2$ share a vertex, then we can check that there is no properly colored path joining other two vertices of $e_1$ and $e_2$.
Thus $e_1$ and $e_2$ must be a matching in $G$. 
Since $n\le 3$ and $m+n\ge 9$, there exist two vertices $u, v\in V(G)$ with $uv\notin E(G)$ such that neither $u$ nor $v$ is on $e_i$ for $i=1,2$. 
Since $G$ is a complete bipartite graph, we see that there is no properly colored path joining $u$ and $v$, a contradiction. 
Thus we have $pc_{opt}(G)=4$. 
 
We next consider the case $n\ge 4$. 
Suppose that $pc_{opt}(G)\le 4$. 
To give an optimal recoloring of $G$, let us firstly consider the case we will use exactly one new color, say color $1$. 
In that case, we can recolor at most three edges. Since $n+m\ge 9$, wherever we rcolor at most three edges in $G$, we can find two vertices $u, v\in V(G)$ with $uv\notin E(G)$ such that neither $u$ nor $v$ is on an edge with color $1$, meaning that there is no properly colored path joining $u$ and $v$. This is a contradiction.  
Thus we need at least two distinct new colors to make $G$ properly connected. 

Now suppose that we gave an optimal recoloring on $G$. 
By the above observation, we may assume that $G$ contains two edges $e_1, e_2$ with color $1$, $2$, respectively, and other edges have color $0$.  
Assume for the moment that $e_1$ and $e_2$ are independent edges. 
We then consider another different edge-coloring of $G$ by modifying the color of $e_2$ from color $2$ to color $1$. We see that this modified edge-colored $G$ is still properly connected. However, this contradicts the assumption that we gave an optimal recoloring on $G$ before the modification. 
Therefore, we may assume that $e_1$ and $e_2$ share a vertex $w$.  
 Since $m\ge n\ge 4$, there exist two vertices $u, v\in V(G)$ with $uv\notin E(G)$ and $uw, vw\in E(G)$ such that neither $u$ nor $v$ is on an edge with new color. 
We can check that there is no properly colored path joining $u$ and $v$. Since $G$ is now properly connected, this is a contradiction. 

Thus we have $pc_{opt}(G)\ge 5$. 
Take $a, b\in N$ and $x,y \in M$.  
 Recolor $ax$ and $by$ with color $1$, respectively, and recolor $bx$ with color $2$. It is easy to check that the resulting edge-colored $G$ is properly connected, meaning that $pc_{opt}(G)\le 5$. Consequently, $pc_{opt}(G)=5$. 
 
\end{proof}

Theorem~\ref{kmn} together with Propositions~\ref{f1} and~\ref{P4free} yields the following corollary. 

\begin{cor}
If $G$ is a monochromatic $2$-connected $P_4$-free graph of order at least $9$ then $pc_{opt}(G)\le 5$.  
\end{cor}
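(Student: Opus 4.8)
The plan is to combine the structural result of Proposition~\ref{P4free} with the bounds from Theorem~\ref{kmn} and the monotonicity principle of Proposition~\ref{f1}. Since $G$ is a connected $P_4$-free graph, Proposition~\ref{P4free} tells us that either $G$ is complete, in which case $pc_{opt}(G)=0\le 5$ trivially, or $G$ contains a spanning complete bipartite subgraph $K_{m,n}$ with $\min\{2,\kappa(G)\}\le n\le m$, where one partite set is a minimum cutset of $G$. The hypothesis that $G$ is $2$-connected gives $\kappa(G)\ge 2$, so $\min\{2,\kappa(G)\}=2$ and hence $n\ge 2$. Thus the spanning complete bipartite subgraph $H\cong K_{m,n}$ satisfies $m\ge n\ge 2$.

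First I would dispose of the complete case. Then, in the remaining case, I would invoke Proposition~\ref{f1}: since $H=K_{m,n}$ is a spanning connected subgraph of the monochromatic graph $G$, we have $pc_{opt}(G)\le pc_{opt}(K_{m,n})$. Because $|V(G)|\ge 9$ and $H$ is spanning, we have $m+n\ge 9$, so Theorem~\ref{kmn} applies and gives $pc_{opt}(K_{m,n})\le 5$ (the value is $4$ when $n\in\{2,3\}$ and $5$ when $n\ge 4$, so in all cases the bound is at most $5$). Chaining the two inequalities yields $pc_{opt}(G)\le 5$, as desired.

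The one point requiring a little care — and the only plausible obstacle — is verifying that the numerical hypotheses of Theorem~\ref{kmn} are genuinely met, namely $m\ge n\ge 2$ and $m+n\ge 9$. The bound $n\ge 2$ comes from $2$-connectivity as noted above, and $m\ge n$ is part of the conclusion of Proposition~\ref{P4free}. Since $H$ is a \emph{spanning} subgraph, its partite sets exhaust $V(G)$, so $m+n=|V(G)|\ge 9$. Hence all hypotheses of Theorem~\ref{kmn} hold and the argument goes through cleanly.

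I expect this corollary to be essentially a direct assembly of the three cited results rather than to need any new combinatorial idea; the proof should be only a few lines. The slight subtlety worth flagging explicitly in the write-up is that $2$-connectivity is used precisely to force $n\ge 2$ (a $1$-connected $P_4$-free graph could have a cut-vertex and a partite set of size $1$, which would put us outside the scope of Theorem~\ref{kmn}), and this is why the hypothesis $\kappa(G)\ge 2$ appears in the statement.
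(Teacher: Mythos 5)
Your proposal is correct and is exactly the argument the paper intends: the paper derives this corollary by citing Proposition~\ref{P4free}, Proposition~\ref{f1} and Theorem~\ref{kmn} without further elaboration, and your write-up just spells out the same chain of deductions, including the correct verification of the hypotheses $m\ge n\ge 2$ and $m+n\ge 9$.
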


We can determine the optimal proper connection number for trees as follows. 

\begin{thm}\label{main}
If $T$ is a monochromatic tree of order $n\ge 2$ then $pc_{opt}(T)=n-2-\alpha'(T)+\Delta(T)$. 
\end{thm}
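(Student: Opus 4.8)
My plan is to reduce the statement to two clean combinatorial facts about trees. The first is that, since a tree has a unique path between any two vertices, an edge-coloring of $T$ makes it properly connected if and only if it is a proper edge-coloring in the usual sense, i.e. every vertex sees pairwise distinct colors on its incident edges. Indeed, applying the properly-colored condition to the length-two path $uvw$ shows proper connection forces local properness at each vertex, and conversely local properness makes \emph{every} path (in particular every unique $u$--$v$ path) properly colored. Consequently, in any recoloring making $T$ properly connected, the edges retaining color $0$ form a matching $M_0$, each new color class is a matching, and $E(T)$ is partitioned into matchings. With $p=(n-1)-|M_0|$ and $q$ the number of new colors, the cost is $p+q=(n-1)-|M_0|+q$, and I want to minimize this over all such partitions.

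For the lower bound I would record two constraints. Every matching has size at most $\alpha'(T)$, so $|M_0|\le\alpha'(T)$; and a vertex of degree $\Delta(T)$ forces at least $\Delta(T)$ distinct colors among its incident edges, so at least $\Delta(T)$ color classes are nonempty. If $M_0\neq\emptyset$ then color $0$ is one of these classes, giving $q+1\ge\Delta(T)$ and hence $p+q\ge(n-1)-\alpha'(T)+(\Delta(T)-1)=n-2-\alpha'(T)+\Delta(T)$; if $M_0=\emptyset$ then $q\ge\Delta(T)$ and the cost is at least $(n-1)+\Delta(T)$, which is larger since $\alpha'(T)\ge1$. This yields $pc_{opt}(T)\ge n-2-\alpha'(T)+\Delta(T)$.

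For the matching upper bound I would exhibit a single partition attaining this value: take $M_0$ to be a maximum matching (so $|M_0|=\alpha'(T)$ and $p=n-1-\alpha'(T)$) and color the remaining forest $T\setminus M_0$ with $\Delta(T)-1$ new colors. Since a forest of maximum degree $d$ is properly edge-colorable with $d$ colors, this is possible exactly when $\Delta(T\setminus M_0)\le\Delta(T)-1$, i.e. exactly when $M_0$ saturates every vertex of degree $\Delta(T)$; then the full coloring is proper because the single color-$0$ edge at any vertex automatically differs from the new-colored edges there. The cost becomes $(n-1-\alpha'(T))+(\Delta(T)-1)=n-2-\alpha'(T)+\Delta(T)$, matching the lower bound. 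Thus the whole theorem rests on the following lemma, which I expect to be the main obstacle: \emph{every tree has a maximum matching saturating all of its maximum-degree vertices.}

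To prove the lemma I would use an exchange argument (we may assume $\Delta(T)\ge2$, as $\Delta(T)=1$ gives $T=K_2$, which is trivial). Among all maximum matchings, choose one, $M$, exposing the fewest maximum-degree vertices, and suppose for contradiction some maximum-degree vertex is exposed. Let $U$ be the set of maximum-degree vertices exposed by at least one maximum matching attaining this minimum. For $u\in U$, fix such a matching $M_u$ exposing $u$; every neighbor of $u$ is matched by maximality, and if a neighbor $x$ were matched to a vertex $y$ of degree below $\Delta(T)$, then $M_u-xy+ux$ would be a maximum matching exposing strictly fewer maximum-degree vertices, a contradiction. Hence every neighbor of $u$ is matched to a maximum-degree vertex, and the same swap $M_u-xy+ux$ exposes that partner $y$, so $y\in U$. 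Now root $T$ at a fixed vertex of $U$ and pick $u\in U$ at maximum distance from the root. As $u$ is exposed in $M_u$ it has no partner, so each of its $\Delta(T)-1\ge1$ children is matched downward (a child cannot be matched to $u$, nor be an exposed leaf, which would give the augmenting edge to $u$), hence to a maximum-degree vertex strictly farther from the root that, by the swap above, also lies in $U$ — contradicting the choice of $u$. Therefore $M$ exposes no maximum-degree vertex, proving the lemma and, with the two bounds, the theorem.
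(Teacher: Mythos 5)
Your proof is correct and follows essentially the same route as the paper: reduce proper connection of a tree to proper edge-coloring, get the lower bound from the matching and maximum-degree constraints, and get the upper bound by leaving a maximum matching on color $0$ that saturates every vertex of degree $\Delta(T)$ and properly coloring the remaining forest with $\Delta(T)-1$ new colors. The paper establishes the same key lemma with the same extremal choice of matching (minimizing the number of exposed maximum-degree vertices), but closes the argument with a maximal alternating path from an exposed maximum-degree vertex rather than your rooted farthest-vertex-in-$U$ argument; the two are equivalent exchange arguments.
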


\begin{proof}
The theorem obviously holds for $n=2$. Thus we may assume that $n\ge 3$ and hence $\Delta(T)\ge 2$. Note that, if $G$ is a tree, then the statement that $G$ is properly connected is equivalent to the statement that $G$ is properly colored.

 We first show that $pc_{opt}(T)\ge n-2-\alpha'(T)+\Delta(T)$. Suppose that we gave an edge-coloring on $T$ so that $T$ is properly connected.
Let $T_1$ be a maximal monochromatic subgraph of $T$ with color $0$. Since $T$ is now properly connected, note that $E(T_1)$ forms a matching and hence $|E(T_1)|\le \alpha'(T)$. This implies that we recolored at least $n-1-\alpha'(T)$ edges of $T$. 
Since $T$ is properly connected, note that $T$ contains no monochromatic $P_3$, meaning that we need at least $\Delta(T)-1$ new colors to make $T$ properly connected. Thus we have $pc_{opt}(T)\ge n-2-\alpha'(T)+\Delta(T)$. 

We next show that $pc_{opt}(T)\le n-2-\alpha'(T)+\Delta(T)$. It suffices to show that there is some appropriate edge-coloring on some $n-1-\alpha'(T)$ edges in $T$ with $\Delta(T)-1$ new colors to make $T$ properly connected. To do this, choose a maximum matching $M$  
 in $T$ so that $|\{v\in V(T)- V(M)| \  d_T(v)=\Delta(T)\}|$ is as minimum as possible. Now we will give a new color on each edge of $E(T)- M$. Note that $|E(T)- M|=n-1-\alpha'(T)$.  
Put $V(T)- V(M)=\{w_i|\ 1\le i\le l\}$(possibly, an empty set). Since $M$ is a maximum matching, note that $T[\{w_1,\ldots, w_l\}]$ contains no edge.

We now claim that the set $\{v\in V(T)- V(M)| \ d_T(v)=\Delta(T)\}$ is indeed empty. Suppose not, and take a vertex $x\in \{v\in V(T)- V(M)| \ d_T(v)=\Delta(T)\}$. 
Consider a maximal path $P=p_1p_2\ldots p_t$ in $T$ such that $p_1=x$ and $p_{2i}p_{2i+1}\in M$ for every $i\ge 1$. 
By definition, note that $V(P)-\{p_1, p_t\}\subset V(M)$. 
Since $n\ge 3$ and $T[\{w_1,\ldots, w_l\}]$ contains no edge, note that $t\ge 3$.  
By the maximality of $P$, if $p_t\in V(M)$ then $d_T(p_t)=1$. 
Assume for the moment that $p_t\notin V(M)$. This implies that $t$ is an even number. 
Consequently, $(M- E(P))\cup \{p_{2i-1}p_{2i}|\ 1\le i\le t/2\}$ forms a matching of size greater than $\alpha'(T)$, a contradiction.  
Hence we may assume that $\{p_i|\ 2\le i\le t\}\subset V(M)$, meaning that $t$ is an odd integer and  $d_T(p_t)=1$. 
Then by replacing $M$ by $(M- E(P))\cup \{p_{2i-1}p_{2i}|\ 1\le i\le (t-1)/2\}$, we get a contradiction to the choice of $M$ because $d_T(p_1)=\Delta(T)$ and $d_T(p_t)=1<\Delta(T)$. Thus the claim holds. 
Let $T'$ be the forest obtained from $T$ by deleting all edges of $M$. The above claim implies that $\Delta(T')<\Delta(T)$. 
Hence we can give a proper edge-coloring on $T'$ by using at most $\Delta(T)-1$ new colors. Combining the properly colored $T'$ with $M$, we can make $T$ properly connected, thereby proving that $pc_{opt}(T)\le n-2-\alpha'(T)+\Delta(T)$, and hence $pc_{opt}(T)= n-2-\alpha'(T)+\Delta(T)$, as desired.

\end{proof}

Since any connected graph contains a spanning tree, we obtain the following corollary.

\begin{cor}\label{general}
If $G$ is a monochromatic connected graph of order $n\ge 3$ such that $G\ncong K_n$,  then $\lfloor diam(G)/2\rfloor+1\le pc_{opt}(G)\le \min\{n-2-\alpha'(T)+\Delta(T)|\ T$ is a spanning tree of $G\}\le 2n-4$. 
\end{cor}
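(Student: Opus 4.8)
The plan is to prove the three inequalities of the chain separately. The two upper bounds follow almost immediately from the results already established, so the only substantive content lies in the diameter lower bound $\lfloor diam(G)/2\rfloor+1\le pc_{opt}(G)$.

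For the upper bounds, I would first recall that every connected graph $G$ admits a spanning tree $T$. Since $T$ is a spanning connected subgraph of $G$, Proposition~\ref{f1} gives $pc_{opt}(G)\le pc_{opt}(T)$, and Theorem~\ref{main} evaluates $pc_{opt}(T)=n-2-\alpha'(T)+\Delta(T)$. Minimizing over all spanning trees $T$ of $G$ yields the middle inequality. For the final bound $\le 2n-4$, it suffices to bound a single term: since $n\ge 3$, any spanning tree $T$ has at least one edge, so $\alpha'(T)\ge 1$, while trivially $\Delta(T)\le n-1$. Hence $n-2-\alpha'(T)+\Delta(T)\le n-2-1+(n-1)=2n-4$, and in particular the minimum over all spanning trees is at most $2n-4$.

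The heart of the argument is the lower bound. Let $d=diam(G)$ and fix vertices $u,v$ with $dist_G(u,v)=d$; since $G\ncong K_n$ there are two nonadjacent vertices, so $d\ge 2$. Consider any recoloring that makes $G$ properly connected using $p$ recolored edges and $q$ new colors with $p+q=pc_{opt}(G)$. By hypothesis there is a properly colored $u$--$v$ path $P$, and its length $\ell$ satisfies $\ell\ge d$. Because $P$ is properly colored, no two consecutive edges share a color; in particular the edges of $P$ still carrying color $0$ contain no two consecutive edges, so at most $\lceil \ell/2\rceil$ edges of $P$ retain color $0$. Therefore at least $\ell-\lceil \ell/2\rceil=\lfloor \ell/2\rfloor\ge \lfloor d/2\rfloor\ge 1$ edges of $P$ have been recolored, giving $p\ge \lfloor d/2\rfloor\ge 1$. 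Since recoloring at least one edge requires at least one new color, we have $q\ge 1$, and we conclude $pc_{opt}(G)=p+q\ge \lfloor d/2\rfloor+1$.

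I expect the only genuinely nontrivial step to be the counting argument in the last paragraph, namely that a properly colored path of length $\ell$ must contain at least $\lfloor \ell/2\rfloor$ recolored edges; this is where the interplay between the length of a geodesic and the no-two-adjacent-monochromatic constraint is used. The remaining points—that $p\ge 1$ forces $q\ge 1$, and that $G\ncong K_n$ forces $diam(G)\ge 2$—are routine but must be checked to ensure the $+1$ in the lower bound is justified rather than merely $p\ge\lfloor d/2\rfloor$.
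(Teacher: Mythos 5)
Your proof is correct and follows essentially the same route as the paper: the upper bounds come from Proposition~\ref{f1} applied to a spanning tree together with Theorem~\ref{main}, and the lower bound comes from counting recolored edges on a properly colored path between two vertices at distance $diam(G)$ (the paper states this only as a one-line remark, which your counting argument and the observation that $p\ge 1$ forces $q\ge 1$ make precise). No gaps.
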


The lower bound on Corollary~\ref{general} follows from the fact that we need to recolor at least $\lfloor diam(G)/2\rfloor$ edges on the path joining a pair of vertices with distance $diam(G)$ to make $G$ properly connected. The upper bound on Corollary~\ref{general} can be attained when $G$ is a star.

\section{Some remarks, extension and open problems}

There are many problems together with some extension on the optimal proper connection number of graphs. 

Aside from the case that a connected graph $G$ belongs to some basic family of graphs such as trees or complete bipartite graphs, it might be difficult to find an explicit formula on $pc_{opt}(G)$ for some other family of graphs. Perhaps this could be a challenging problem. 

Let $G$ be a monochromatic connected graph of order $n$. 
If $G$ contains many edges, then it tends to contain a Hamiltonian path, meaning that $pc_{opt}(G)\le \lfloor n/2\rfloor+1$ holds by Corollary~\ref{general} and Proposition~\ref{f1}. It would be an interesting problem to consider what kind of graphs have a constant upper bound on $pc_{opt}(G)$. Also, considering upper bounds on $pc_{opt}(G)$ for sparse graphs would be interesting. For example, what about connected cubic graphs? 

For applications, constructing faster algorithms for giving optimal recolorings in graphs would be important. 
Note that the proofs of our results are constructive. So we can extract a polynomial time algorithm to make $G$ properly connected from there.   

We can also think about some extension in this notion. 
In this paper, we consider the sum of the number of edges and colors when recoloring. 
However, one may simply consider the number of edges for the recoloring. Thus we can define the following function of edge-colored graphs for a monochromatic connected graph $G$. 

 $pc_{opt}'(G):=\min\{p|$ we can make $G$ properly connected\\
\hfill by recoloring $p$ edges of $G \ \}.$\\

Modifying the proofs of our results slightly, we can easily obtain the following counterparts. (Indeed, we have only to skip the argument for counting the number of new colors in the proofs of our previous theorems. So the proofs are omitted.)

\begin{thm}\label{main2}
If $T$ is a monochromatic tree of order $n\ge 2$ then $pc_{opt}'(T)=n-1-\alpha'(T)$. 
\end{thm}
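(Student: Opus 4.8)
The plan is to mirror the proof of Theorem~\ref{main}, observing that once the color count is dropped both directions become substantially simpler. As in that proof, I would first reduce to $n\ge 3$ (the case $n=2$ being immediate) and recall that for a tree ``properly connected'' is equivalent to ``properly colored,'' since between any two vertices there is a unique path.

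For the lower bound $pc_{opt}'(T)\ge n-1-\alpha'(T)$, I would reuse verbatim the first half of the argument for Theorem~\ref{main}. Fix any recoloring making $T$ properly connected, hence properly colored, and let $T_1$ be a maximal monochromatic subgraph with color $0$. Because no two adjacent edges of a properly colored graph share a color, $E(T_1)$ is a matching, so $|E(T_1)|\le\alpha'(T)$. The recolored edges are precisely $E(T)\setminus E(T_1)$, and there are at least $(n-1)-\alpha'(T)$ of them.

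For the upper bound I would argue directly and without any of the delicate matching selection needed before. Take \emph{any} maximum matching $M$ of $T$, retain its $\alpha'(T)$ edges with color $0$, and recolor each of the remaining $n-1-\alpha'(T)$ edges with a \emph{distinct} new color. Properness is then checked in three trivial cases: two edges of $M$ are never adjacent since $M$ is a matching; an edge of $M$ (color $0$) and a recolored edge carry different colors because every new color differs from $0$; and two recolored edges carry distinct colors by construction. Thus $T$ becomes properly colored, hence properly connected, giving $pc_{opt}'(T)\le n-1-\alpha'(T)$ and the equality.

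There is essentially no obstacle here, and that is precisely the point worth flagging. In Theorem~\ref{main} the maximum matching had to be chosen to minimize the number of unsaturated vertices of maximum degree, solely to force $\Delta(T')<\Delta(T)$ and thereby cap the number of new colors at $\Delta(T)-1$. Since $pc_{opt}'$ ignores the color count entirely, that care is unnecessary: an arbitrary maximum matching suffices and we may spend a fresh color on every recolored edge. The only genuine content is the matching bound on the monochromatic part, which is shared with Theorem~\ref{main}.
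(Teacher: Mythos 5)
Your proof is correct and matches the paper's intended argument: the paper omits the proof of Theorem~\ref{main2}, stating only that one adapts the proof of Theorem~\ref{main} by dropping the color-counting step, which is precisely what you do (identical lower bound via the matching $T_1$, and an arbitrary maximum matching with distinct new colors on the remaining edges for the upper bound).
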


\begin{thm}\label{kmn2}
Let $G$ be a monochromatic complete bipartite graph $K_{m,n}$ such that $m\ge n\ge 2$ and $m+n\ge 9$. Then $pc_{opt}'(G)=2$ for $n=2, 3$, and $pc_{opt}'(G)=3$ for $n\ge 4$.  
\end{thm}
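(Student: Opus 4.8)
The plan is to follow the proof of Theorem~\ref{kmn} line by line, simply deleting every sentence that counts the number of new colors, since $pc_{opt}'$ ignores how many colors are used. Write $N,M$ for the partite sets with $n=|N|\le |M|=m$. For the upper bounds I would reuse verbatim the explicit recolorings already exhibited in Theorem~\ref{kmn}, because those constructions never relied on minimizing colors: for $2\le n\le 3$, recoloring the two edges $ax$ and $bx$ (with $a,b\in N$, $x\in M$) properly connects $G$, giving $pc_{opt}'(G)\le 2$; for $n\ge 4$, recoloring the three edges $ax$, $by$, $bx$ properly connects $G$, giving $pc_{opt}'(G)\le 3$. So the entire content lies in the matching lower bounds, which must now be phrased purely in terms of edges.

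The recurring tool is the following: if only $p$ edges are recolored, then a partite set of size $s$ contains at least $s-p$ vertices incident to no recolored edge (call these \emph{untouched}), and any path joining two untouched vertices of the same part that stays in color $0$ and has length at least $2$ fails to be properly colored. For $2\le n\le 3$ we have $m\ge 6$, so recoloring a single edge leaves at least five untouched vertices in $M$; choosing two of them, say $y,z$, every path between $y$ and $z$ alternates between the parts, and since at most one edge is nonzero it must contain two consecutive edges of color $0$. Hence no proper path joins $y$ and $z$, and $pc_{opt}'(G)\ge 2$.

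The crux, and the step that genuinely departs from Theorem~\ref{kmn}, is the bound $pc_{opt}'(G)\ge 3$ for $n\ge 4$: there the independent-edge subcase was handled by merging two colors into one, a move that is vacuous for $pc_{opt}'$. Instead I would argue directly that no recoloring of exactly two edges properly connects $G$. Since $m\ge n\ge 4$, at least two vertices of $N$ and at least two of $M$ are untouched. The key claim is that joining two untouched vertices $u,v\in N$ by a proper path forces the two recolored edges to be incident to a common vertex of $N$: the length-$2$ path $uwv$ is monochromatic of color $0$; the length-$4$ path $uw_1aw_2v$ needs both $w_1a$ and $aw_2$ recolored, hence sharing the $N$-vertex $a$; and any longer path would contain an interior pair of adjacent color-$0$ edges, as only two edges are nonzero. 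By the symmetric claim, joining two untouched vertices of $M$ forces the two recolored edges to share a vertex of $M$. Since two distinct edges cannot share both an $N$-vertex and an $M$-vertex, these incidence requirements are incompatible, so some untouched same-part pair admits no proper path; thus $pc_{opt}'(G)\ge 3$, and with the upper bounds the theorem follows. The main obstacle is formalizing this length-$4$-versus-longer dichotomy cleanly; once that is in place, the remainder is a transcription of the earlier proof with the color bookkeeping removed.
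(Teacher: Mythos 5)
Your proposal is correct, and it is essentially the route the paper intends (the paper omits this proof, asserting one need only delete the color-counting from the proof of Theorem~\ref{kmn}). However, you have correctly identified that this assertion is slightly too optimistic: in the lower bound for $n\ge 4$, the subcase where the two recolored edges $e_1,e_2$ are independent is settled in Theorem~\ref{kmn} by merging color $2$ into color $1$, which decreases $p+q$ but leaves $p$ unchanged, so that step is vacuous for $pc_{opt}'$ and must be replaced. Your replacement is sound: since $m\ge n\ge 4$, each part contains two untouched vertices; a proper path between two untouched vertices of the same part cannot have length $2$ (both edges color $0$) or length at least $6$ (at least $2k-2>k$ color-$0$ edges among $2k$ positions forces two adjacent ones), so it must have length $4$ with both middle edges recolored and hence incident to a common vertex of that part; applying this to both parts forces the two distinct recolored edges to share a vertex in $N$ and a vertex in $M$, which is impossible. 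This also subsumes the cases of zero or one recolored edge. The remaining pieces (the upper bounds via the explicit recolorings from Theorem~\ref{kmn}, and the lower bound $pc_{opt}'(G)\ge 2$ for $n\le 3$ via untouched vertices in $M$) transcribe directly as you describe. The net effect is that your write-up is slightly more work than the paper advertises but is actually a complete argument, whereas a literal ``delete the color bookkeeping'' reading of Theorem~\ref{kmn}'s proof would leave the independent-edges case unhandled; your incidence-incompatibility argument is the right patch, and as a bonus it recovers the shared-vertex subcase of Theorem~\ref{kmn} as well.
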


\begin{thm}\label{a22}
If $G$ is a monochromatic connected graph such that $\alpha(G)\le 2$ then $pc_{opt}'(G)\le 2$. 
\end{thm}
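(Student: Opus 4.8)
The plan is to reuse, almost verbatim, the structural case analysis from the proof of Theorem~\ref{a2}, discarding only the bookkeeping on the number of colors: since $pc_{opt}'$ counts recolored edges alone, it suffices to exhibit in every case a recoloring that touches at most two edges. First I would dispose of the degenerate situations: the statement is trivial for small order, and if $\alpha(G)=1$ then $G$ is complete and already properly connected, so $pc_{opt}'(G)=0$. For the rest I assume $\alpha(G)=2$, whence $G$ is not complete, and I branch according to whether $G$ is $P_4$-free or contains an induced $P_4$, exactly as before.

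In the $P_4$-free case I would apply Proposition~\ref{P4free} to fix a minimum cutset $S$ for which $G$ carries a spanning complete bipartite subgraph with partite sets $S$ and $V(G-S)$; since $\alpha(G)=2$, the graph $G-S$ splits into two cliques $C_1,C_2$. If $|S|=1$, say $S=\{v\}$, then for any $u\in V(C_1)$ the edge $uv$ is good, so by Proposition~\ref{f2} recoloring this single edge already makes $G$ properly connected. If $|S|\ge 2$, then either both $G[S]$ and $G-S$ contain an edge, and Proposition~\ref{f3} supplies a recoloring of two independent edges, or, reducing via $\alpha(G)=2$ and the minimality of $S$ precisely as in Theorem~\ref{a2}, we land in the case $G[S]$ edgeless with $|S|=2$, where two independent cut-edges $e_1,e_2$ already suffice because $C_1$ and $C_2$ are cliques. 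In every subcase at most two edges are recolored. In the remaining case $G$ contains an induced path $P=p_1p_2p_3p_4$; partitioning $V(G)-V(P)$ into the sets $X_1,X_2,X_3$ as in the proof of Theorem~\ref{a2} and recoloring the two edges $p_1p_2$ and $p_3p_4$ makes $G$ properly connected, again with only two recolored edges.

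I do not expect a genuine obstacle here: the entire argument is already contained in Theorem~\ref{a2}, and the only point needing care is the verification, implicit there, that no construction ever recolors more than two edges (each in fact places a single new color on at most two edges). The one conceptual remark worth recording is that $pc_{opt}'$ permits arbitrarily many colors, so a priori one might hope to save edges by spending extra colors; but since two edges already work, and since the $C_5$-blow-up example following Theorem~\ref{a2} (five disjoint cliques $A_0,\dots,A_4$ with all edges between consecutive $A_i$ modulo $5$) has $\alpha=2$ yet demonstrably forces at least two recolored edges, the bound $pc_{opt}'(G)\le 2$ is best possible and nothing is lost by ignoring the color count.
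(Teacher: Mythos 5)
Your proposal is correct and is exactly the proof the paper intends: the paper explicitly states that Theorem~\ref{a22} follows by rerunning the proof of Theorem~\ref{a2} and skipping the count of new colors, and your case-by-case verification that each construction recolors at most two edges is precisely that argument. The added remark on sharpness via the blow-up of $C_5$ is a harmless bonus.
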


When we consider $pc_{opt}'(G)$, we never care about the number of new colors for the recoloring to make $G$ properly connected.  
Conversely, note that, if we consider the number of colors but never care about the number of edges for the recoloring of $G$, then the proper connection number $pc(G)$ can be the counterpart of $pc_{opt}'(G)$.  

We can think about this topic in a more strict manner: For a monochromatic connected graph $G$, one may ask which ordered pair $(p, q)$ with $p+q=pc_{opt}(G)$ gives us the optimal recoloring of $G$. Note that, not all such pairs $(p, q)$ provide us the optimal recoloring of $G$. For this requirement, trivially, we must have $p\ge q\ge pc(G)$, but it is not sufficient in many cases. To describe this new direction more precisely, we define the following. For a monochromatic connected graph $G$, $G$ is $(p,q)$-\textit{feasible} if we can make $G$ properly connected by recoloring $p$ edges with $q$ new colors; in particular, when $p+q=pc_{opt}(G)$, we say that $G$ is $(p, q)$-\textit{optimal feasible}. 

In fact we already had some observation on the $(p,q)$-optimal feasibility for small $p,q$.  To see this, note that, Proposition~\ref{f2} implies that for any non-complete monochromatic connected graph $G$, $G$ is $(1,1)$-optimal feasible if and only if $G$ contains a good edge.  
Moreover, we can extract the following theorem from the proof of Theorem~\ref{kmn}. 

\begin{thm}
Let $G$ be a monochromatic complete bipartite graph $K_{m,n}$ such that $m\ge n\ge 2$ and $m+n\ge 9$. If $n=2, 3$, then $G$ is $(2,2)$-optimal feasible, and if $n\ge 4$, then $G$ is $(3,2)$-optimal feasible. 
\end{thm}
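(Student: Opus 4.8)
The plan is to read off the required feasibility directly from the explicit recolorings that were already constructed to establish the upper bounds in the proof of Theorem~\ref{kmn}. First I would recall from that theorem that $pc_{opt}(K_{m,n})=4$ when $n\in\{2,3\}$ and $pc_{opt}(K_{m,n})=5$ when $n\ge 4$. Hence for the two claimed pairs we have $p+q=2+2=4$ and $p+q=3+2=5$, respectively, each matching the value of $pc_{opt}(G)$; so in both cases the optimality half of $(p,q)$-optimal feasibility is automatic, and all that remains is to exhibit a concrete recoloring witnessing $(p,q)$-feasibility with exactly $p$ edges and exactly $q$ new colors.

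For $n\in\{2,3\}$ I would take $a,b\in N$ and a vertex $x\in M$, recolor $ax$ with color $1$ and $bx$ with color $2$, and leave every other edge colored $0$. This spends exactly two edges and exactly two new colors, so $G$ is $(2,2)$-feasible; combined with $2+2=pc_{opt}(G)$ this yields $(2,2)$-optimal feasibility. The proper-connectivity check is word-for-word the one already performed for the bound $pc_{opt}(G)\le 4$ in Theorem~\ref{kmn}, which relied on $|N-\{a,b\}|\le 1$.

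For $n\ge 4$ I would instead pick $a,b\in N$ and $x,y\in M$, recolor $ax$ and $by$ with color $1$ and $bx$ with color $2$, keeping all remaining edges colored $0$. This uses exactly three edges and exactly two new colors, so $G$ is $(3,2)$-feasible, and together with $3+2=pc_{opt}(G)$ it is $(3,2)$-optimal feasible. The verification that this coloring is properly connected is identical to the argument establishing $pc_{opt}(G)\le 5$ in Theorem~\ref{kmn}.

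Since the proof is purely extraction from the earlier construction, I do not anticipate any genuine difficulty. The only point requiring attention is the bookkeeping: one must confirm that each witnessing recoloring from Theorem~\ref{kmn} spends precisely the prescribed number of edges $p$ and new colors $q$, so that the already-established proper-connectivity conclusion can be reused verbatim with no fresh argument.
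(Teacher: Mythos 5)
Your proposal is correct and follows exactly the route the paper intends: the paper gives no separate proof but states the theorem is extracted from the proof of Theorem~\ref{kmn}, and your extraction (the $(2,2)$ recoloring $ax\mapsto 1$, $bx\mapsto 2$ for $n\in\{2,3\}$, and the $(3,2)$ recoloring $ax, by\mapsto 1$, $bx\mapsto 2$ for $n\ge 4$, combined with the computed values of $pc_{opt}$) is precisely the intended argument.
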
   

As we can see from the above argument, our work on the optimal proper connection number could contribute to some problems on $(p, q)$-optimal feasibility in monochromatic connected graphs. The author believe that there will be many interesting problems around this area of study.   

On the other hand, as discussed in \cite{Borozan}, we can consider the ``properly $k$-connected version" in this topic. 
An edge-colored $k$-connected graph $G$ is \textit{properly $k$-connected} if between every pair of distinct vertices, 
there exist $k$ internally-disjoint paths that are properly connected. For a monochromatic $k$-connected graph $G$, we can similarly define the following function. \\

$pc_{opt}^k(G):=\min\{p+q|$ we can make $G$ properly $k$-connected\\
\hfill by recoloring some $p$ edges of $G$ with $q$ new colors$\}.$\\

Details on this function together with the above observation will be discussed elsewhere. \\

\noindent\textbf{Acknowledgments}\\ 

This work was supported by JSPS KAKENHI (No.~15K04979)

\end{document}